\numberwithin{equation}{section}
\def\phi{\varphi}
\def\R{\mathbb R}
\def\N{\mathbb N}
\newcommand{\bfR}{\mathbb{R}}
\newcommand{\bfS}{\mathbb{S}}
\newcommand{\e}{\varepsilon}
\newcommand{\goto}{\rightarrow}
\newcommand{\ol}{\overline}
\newcommand{\be}{\begin{equation}}
\newcommand{\ee}{\end{equation}}
\newcommand{\bea}{\begin{eqnarray}}
\newcommand{\eea}{\end{eqnarray}}
\newtheorem{theorem}{Theorem}[section]
\newtheorem{lemma}[theorem]{Lemma}
\newtheorem{corollary}[theorem]{Corollary}
\theoremstyle{definition}
\newtheorem{definition}[theorem]{Definition}
\newtheorem{example}[theorem]{Example}
\theoremstyle{remark}
\newtheorem{remark}[theorem]{Remark}
\numberwithin{equation}{section}
\def\fuhome{@math.fu-berlin.de}
\def\impahome{@impa.br}
\def\jhuhome{@math.jhu.edu}
\begin{document}

\title{ The half-space property and entire positive minimal graphs in $M \times \R$.}
% Information for first author
\author{Harold Rosenberg}
\thanks{}
\address{Harold Rosenberg: Insituto Nacional de Mathem\'atica Pura e
  Applicada (IMPA), Estrada Dona Castorina 110, 22460 Rio de Janeiro -
RJ, BRAZIL}
% Current address
\curraddr{}
\email{rosen\impahome}

% Information for second author
\author{Felix Schulze}
\thanks{}
\address{Felix Schulze: 
  Freie Universit\"at Berlin, Arnimallee 3, 
  14195 Berlin, GERMANY}
% Current address
\curraddr{}
\email{Felix.Schulze\fuhome}

% Information for third author
\author{Joel Spruck}
\thanks{Research of the third author supported in part by the NSF and Simons Foundation.}
\address{Joel Spruck: 
  Department of Mathematics, John Hopkins University,
  Baltimore, MD 21218, USA}
% Current address
\curraddr{}
\email{js\jhuhome}

% General info
\subjclass[2000]{}

\dedicatory{}

\keywords{}

\begin{abstract}
 We show that a properly immersed minimal hypersurface in \mbox{$M\times
 \R_+$} equals some $M\times\{c\}$ when $M$ is a complete,
recurrent $n$-dimensional Riemannian manifold with bounded curvature.
If on the other hand, $M$ has nonnegative Ricci curvature with curvature bounded below,
the same result holds for any positive entire minimal graph over $M$.
\end{abstract}

\maketitle
\section{Introduction}
\label{sec1}
A problem that has received considerable attention is to give
conditions which force two minimal submanifolds $S_1, \,S_2$ of a
Riemannian manifold $N$ to intersect. If they do not intersect,
does this determine the geometry of $S_1, \,S_2$ in $N$?

Perhaps the simplest example of this situation is when $N$ is a
strictly convex ovaloid (i.e an $\bfS^2$ with a metric of positive
curvature) and $S_1, \,S_2$ are complete embedded geodesics of $N$.
There is a three dimensional version of this simple example. Let $N$
be a compact 3-dimensional manifold with positive sectional
curvatures.  Then if $S_1, \,S_2$ are finite topology complete minimal
surfaces embedded in $N$, they must intersect. This follows from the
minimal lamination closure theorem \cite{MR}.  There is also the
classical theorem of Frankel \cite{F} which states that if $N$ be a
closed n dimensional manifold with positive Ricci curvature and $S_1,
\,S_2$ are compact minimal $(n-1)$ dimensional submanifolds immersed
in $N$, then they intersect. For some other results on this problem, 
 see \cite{DH}, \cite{DMR}, \cite{Mazet_10}, \cite{ER}, \cite{HRS_08}.

In this paper we consider this question when $N = M\times \R$ where
$M$ is a complete n dimensional Riemannian manifold, $S_1 = M \times \{0\}$
and $S_2$ is a properly immersed minimal hypersurface in $M\times
\R_+$. Our problem then becomes to determine what
conditions  on $M$ imply that $S = S_2$ is the totally geodesic slice $M \times \{c\}$ for some positive $c$?

Perhaps the first result in this direction was the celebrated theorem
of Bombieri, De Giorgi and Miranda \cite{BGM} who proved that an
entire minimal positive graph over $\R^n$ is a totally geodesic
slice. The hyperbolic plane $\mathbb{H}^2$ does not have this
property; there are entire bounded minimal graphs that are not slices.

For a proper immersed minimal surface $S$ in $\R^3=\R^2 \times
\R_+$, the foundational result was discovered by Hoffman and Meeks
\cite{HM} who proved that $S = \R^2 \times \{c\}, \, c\geq 0$.
They called this the {\em half-space theorem}. \\

\begin{definition} \label{def1.1} We will say that {\em M has the
    half-space property} if a minimal hypersurface S properly immersed
  in $M \times \R_+$, equals a slice $M \times \{c\}$. Since there are
  rotationally invariant minimal hypersurfaces in $\R^{n+1},\, n > 2$,
  that are bounded above and below (catenoids), $M = \R^n,\, n>2$ does
  not have the half space property but entire minimal positive graphs
  over $\bfR^n$ are slices.
\end{definition}

Hence it is interesting to find conditions on $M$ which ensure that $M$
has the half space property or the property that positive entire
minimal graphs over $M$ are slices.
Our contributions to these questions are the  following two theorems.
 
\begin{theorem} \label{th1.1} Let $M^n$ be a complete recurrent
  Riemannian manifold with bounded sectional curvatures $|K_{\pi}|
  \leq K_0$ for some constant $K_0$.  Then M has the half
  space property.
 \end{theorem}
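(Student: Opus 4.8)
Write the given minimal hypersurface as $X=(F,u)\colon\Sigma\to M\times\R_+$, with $F\colon\Sigma\to M$ and $u=\pi_{\R}\circ X>0$ the height. On the product metric the gradient $\partial_t$ of the $\R$--coordinate is parallel, so $\pi_{\R}$ has vanishing Hessian; combined with the vanishing of the mean curvature of $S$ this gives that $u$ is a \emph{positive harmonic function} on $(\Sigma,X^{*}g)$. The plan is to show that $S$ (equivalently $\Sigma$) is itself recurrent, i.e.\ parabolic. Since a recurrent manifold admits no nonconstant positive superharmonic function, $u$ would then be constant, say $u\equiv c>0$; hence $S\subset M\times\{c\}$, and since $M\times\{c\}$ is totally geodesic of the same dimension as $S$, the immersion $X$ is a proper local isometry onto the connected manifold $M\times\{c\}$, hence a covering map, so $S=M\times\{c\}$, as claimed. (Equivalently, using the strong maximum principle for $u$ when $\inf_{S}u$ is attained and a downward vertical translation when it is not, the theorem reduces to the negative assertion that $M\times\R_+$ contains no properly immersed minimal hypersurface with $\inf u=0$.)

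To see that $S$ is recurrent I would exploit the vertical projection $\pi\colon M\times\R\to M$, whose restriction $\pi|_{S}\colon S\to M$ is $1$--Lipschitz; thus $|\nabla^{S}u|\le1$ and $|\nabla^{S}(\psi\circ\pi)|\le(|\nabla^{M}\psi|)\circ\pi$ for every $\psi$ on $M$. Using the capacity (cutoff) characterization of recurrence, choose $\psi_j\in C_c^{\infty}(M)$ with $\psi_j\to1$ locally and $\int_{M}|\nabla^{M}\psi_j|^{2}\to0$, and form cutoffs $\eta_j=(\psi_j\circ\pi)\,(\chi_{R_j}\circ u)$ on $S$, with $\chi_{R_j}$ a cutoff in the height variable arranged so that $\eta_j\equiv1$ on a prescribed compact set and $\eta_j$ has compact support in $S$. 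One then wants $\int_{S}|\nabla^{S}\eta_j|^{2}\to0$; after Cauchy--Schwarz this reduces to estimating integrals $\int_{S}(g\circ\pi)\,d\mathrm{vol}_{S}$ with $g$ supported on the annuli carrying $\nabla^{M}\psi_j$, which by the coarea formula equal $\int_{M}g\,\theta_{S}\,d\mathrm{vol}_{M}$, where $\theta_{S}(y)=\sum_{x\in(\pi|_S)^{-1}(y)}|\mathrm{Jac}(\pi|_S)(x)|^{-1}$ is the multiplicity density of the projection. This is exactly where I expect the hypotheses $|K_{\pi}|\le K_0$ and minimality to be indispensable: via the monotonicity formula and a uniform area bound for $S$ inside sets $B^{M}_{r}(y)\times[\delta,\delta^{-1}]$ (which should follow from minimality together with the ambient curvature bound), one controls $\theta_{S}$ on compact parts of $M$ and simultaneously excludes accumulation of $S$ onto $M\times\{0\}$ over a compact region, so that the pulled-back energies tend to zero.

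The main obstacle is precisely this geometric control of $S$. Since $S$ is not assumed graphical, a priori $\pi|_{S}$ may be badly non-injective, may be nearly vertical on large sets (so $\mathrm{Jac}(\pi|_S)$ is tiny), and $S$ may dive toward $M\times\{0\}$; any of these makes $\theta_{S}$, and hence the pulled-back Dirichlet energy, blow up, or makes the test functions fail to be compactly supported. I expect the heart of the proof to be an area/curvature estimate for $S$ on the region $\{u\ge\delta\}$ forcing $\theta_{S}$ to be locally bounded, together with the observation that if instead $\inf_{S}u=0$ then the descending part of $S$ must run off to infinity in $M$ while carrying the harmonic function $u$ down to $0$, which one should be able to package into a nonconstant positive superharmonic function on $M$, contradicting its recurrence. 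Converting this ``$S$ leaks out the bottom'' picture into an honest contradiction with the recurrence of $M$ is, I believe, the crux.
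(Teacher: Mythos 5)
Your reduction steps are fine (the height $u$ is indeed harmonic on $S$, and if $S$ were parabolic then $u\equiv c$ and the properness/covering argument finishes), but the crucial claim --- that $S$ itself is recurrent --- is exactly the point you do not prove, and the route you sketch cannot be closed from the stated hypotheses. The capacity argument through $\pi|_S$ needs $\int_M |\nabla^M\psi_j|^2\,\theta_S$ to be small, i.e.\ a \emph{uniform} bound on the multiplicity density $\theta_S$ on the (expanding, noncompact) annuli carrying $\nabla^M\psi_j$. Properness gives only finiteness of area of $S$ in each fixed compact set, with no uniformity at infinity; the monotonicity formula and the ambient curvature bound give \emph{lower}, not upper, area bounds; and nothing prevents $\pi|_S$ from being nearly vertical on large sets (compare vertical cylinders $\Sigma\times(0,\infty)$ over minimal hypersurfaces $\Sigma\subset M$, which are minimal and are excluded only by properness up to the slice $M\times\{0\}$ --- a feature your energy estimate never exploits). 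The vertical cutoff $\chi_{R_j}\circ u$ raises the same problem in the height direction, since $u$ may be unbounded and you have no control of the volume of $S$ in slabs $\{R_j\le u\le 2R_j\}$. Finally, the fallback suggestion of ``packaging'' the descent of $u$ to $0$ into a positive superharmonic function on $M$ is a hope, not an argument. In short, proving parabolicity of an arbitrary properly immersed $S$ is essentially as hard as the theorem itself, and your proposal leaves that step open (as you acknowledge).

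For comparison, the paper never needs any geometric control of the unknown $S$ beyond properness and the maximum principle. It runs a Hoffman--Meeks type sweeping argument: using the curvature bound it takes a Cheeger--Gromov exhaustion $D_1\subset D_2\subset\cdots$ with uniformly controlled boundaries, solves the minimal graph equation on the annuli $A_n=D_n\setminus\ol{D_1}$ with boundary values $0$ on $\partial D_1$ and $t$ on $\partial D_n$ (existence via the continuity method, with interior gradient bounds from the maximum principle applied to $e^{\alpha u}W$ and boundary gradient bounds from explicit catenoid-type barriers), and then lets $n\to\infty$. Recurrence of $M$ enters only here: the limit graph has bounded gradient, hence is quasi-isometric to the parabolic end $M\setminus D_1$, so its bounded harmonic height vanishes and the family $S_{n,t}$ degenerates to the slice. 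Sweeping these catenoid-like graphs down past a cylinder avoided by the proper surface $S$ then forces $S$ below $M\times\{-t_0\}$, contradicting asymptotic contact with $M\times\{0\}$. If you want to salvage your approach, you would need an independent a priori bound replacing the missing area/multiplicity control; the paper's construction is precisely the device that makes such control unnecessary.
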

 
 \begin{theorem} \label{th1.2} Let $M^n$ be a complete Riemannian
   manifold with nonnegative Ricci curvature and sectional curvatures
   $K_{\pi} \geq -K_0$ for a nonnegative constant $K_0$.  Let $S$ be an
   entire minimal graph in $M \times \bfR $ with height function
   $u\geq 0$. Then $S=M\times \{c\}$ for some constant $c\geq 0$.
\end{theorem}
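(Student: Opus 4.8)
The plan is to reduce the theorem to two ingredients: a scale‑invariant interior gradient estimate for $u$, and a Liouville property on $M$.

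\emph{Soft preliminaries.} Write $S=\{(x,u(x)):x\in M\}$. Since the ambient coordinate $t$ is an affine function on $M\times\R$, its restriction to the minimal hypersurface $S$ is harmonic, $\Delta_S u=0$, and one always has $|\nabla_S u|=\sqrt{1-w^2}<1$, where $w=\langle \p_t,N\rangle\in(0,1]$ is the angle function ($N$ the upward unit normal, $w=(1+|\nabla u|^2)^{-1/2}$). Because $\p_t$ is parallel, $w$ is a Jacobi field, $\Delta_S w+(|A|^2+\Ric_{M\times\R}(N,N))w=0$; and since $M\times\R$ is a product with $\Ric_M\ge 0$, $\Ric_{M\times\R}(N,N)=\Ric_M(N^\top,N^\top)\ge 0$, so $w$ is a positive superharmonic function on $S$. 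Finally $S$ is area‑minimizing: the standard $n$‑form on $M\times\R$ associated with $\nabla u$ is closed (this is precisely the minimal surface equation), has comass one, and calibrates $S$ as well as every vertical translate $S_c$; in particular $S$ is stable. None of this uses the curvature hypotheses, and none of it alone forces $u$ to be constant — when $n\ge 3$ the graph $S$ need not even be parabolic.

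\emph{The main step: a global gradient bound.} I would prove $\sup_M|\nabla u|<\infty$ via an interior gradient estimate for the minimal surface equation on geodesic balls $B_R(p)\subset M$, carried out on the graph. Testing the equation against a cutoff of the form $\eta(\dist_M(\cdot,p)/R)\,\phi(u)$ — equivalently, running the maximum principle for a suitable multiple of $v=w^{-1}$, which is subharmonic on $S$ because $\Ric_{M\times\R}(N,N)\ge 0$ — one uses the Laplacian comparison $\Delta_M \dist_M(\cdot,p)\le (n-1)/\dist_M(\cdot,p)$, valid precisely because $\Ric_M\ge 0$, to absorb the radial part of the cutoff \emph{scale‑invariantly}, while the lower sectional bound $K_M\ge -K_0$ controls the ambient curvature terms in $\Delta_S$. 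This should yield an estimate $|\nabla u(p)|\le C_1\exp(C_2\,u(p)/R)$ with $C_1,C_2$ independent of $R$; since $M$ is complete and $u\ge 0$ is entire, letting $R\to\infty$ gives $|\nabla u(p)|\le C_1$ for every $p$. (Should a residual dependence on $u(p)$ remain, it is first removed by an analogous argument bounding the height.) I expect this to be the hardest step: turning the classical gradient estimate into one with $R$‑independent constants is exactly what $\Ric_M\ge 0$, via the Laplacian comparison theorem, is there to provide, and keeping track of the ambient curvature terms is the delicate part.

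\emph{Conclusion.} Once $|\nabla u|\le C_0$ on all of $M$, the minimal surface operator is uniformly elliptic along $u$: $u$ is a nonnegative solution on $(M,g_M)$ of an elliptic equation with ellipticity constants depending only on $n$ and $C_0$, and — crucially — $u-k$ solves the same equation for every constant $k$. Since $\Ric_M\ge 0$, $M$ is volume‑doubling and satisfies a scale‑invariant Poincar\'e inequality, so by the De Giorgi--Nash--Moser theory this equation obeys a scale‑invariant Harnack inequality on all balls of $M$. Applying it to $u$ and to $u-\inf_{B_{2R}(p)}u$ and letting $R\to\infty$ forces $\operatorname{osc}_{B_R(p)}u\to 0$, so $u\equiv c$ for some $c\ge 0$ and $S=M\times\{c\}$. (When $n=2$ one can instead finish softly: then $S$ has at most quadratic area growth, hence is parabolic, so the positive superharmonic function $w$ is constant, whence $A\equiv 0$ and the nonnegative harmonic function $u$ on the parabolic surface $S$ is constant.)
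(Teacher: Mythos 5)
Your overall route coincides with the paper's: a scale\nobreakdash-invariant interior gradient estimate on geodesic balls $B_R(p)$ (the paper's Theorem \ref{th1}), letting $R\to\infty$ to obtain a global bound $|\nabla^M u|\le C_1$ (Corollary \ref{cor2}), and then viewing $u$ as a solution of the uniformly elliptic divergence-form equation $D_i(\sqrt{g}\,g^{ij}D_j u)=0$ and invoking the scale-invariant Harnack inequality of Saloff-Coste/Grigor'yan (available because $\Ric^M\ge 0$ gives volume doubling and the Poincar\'e inequality, and constants lie in the kernel) to force $u\equiv\mathrm{const}$. The endgame is exactly the paper's argument, and your oscillation variant is fine.

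The gap is in the sketch of the key step, where you have the two curvature hypotheses playing reversed roles, and as stated the mechanism does not close. The cutoff is differentiated with the graph operator $\Delta^S=g^{ij}D_iD_j$, which is \emph{not} the $\sigma$-trace, so the Laplacian comparison $\Delta^M d\le (n-1)/d$ coming from $\Ric^M\ge 0$ gives no control on $g^{ij}D_iD_j\,d(\cdot,p)$: one needs a pointwise bound on the full Hessian of the distance function (the principal curvatures of geodesic spheres), and that is precisely where the sectional bound $K_\pi\ge -K_0$ enters (the paper's Lemma \ref{lem1}, after Karcher), producing the factor $\Psi(R)=(n-1)\sqrt{K_0}R\coth(\sqrt{K_0}R)+1$; since $\Psi(R)/R^2\to 0$, the passage $R\to\infty$ still works. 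Conversely, $\Ric^M\ge 0$ is not there to ``control ambient curvature terms'': in the identity for $Lh$ the ambient contribution is $\widetilde{\text{Ric}}(N,N)=(1-W^{-2})\Ric^M(\gamma,\gamma)\ge 0$, so it is simply discarded with a favorable sign. Two further points are needed to make the step rigorous: the maximum of the test quantity may occur on the cut locus of $p$, where $d^2$ is not smooth --- the paper handles this with a Calabi-type replacement of $p$ by a nearby point on the minimizing geodesic (Lemma \ref{lem2}); and the exponent you announce, $C_2u(p)/R$, actually comes out as $C_2u^2(p)\Psi(R)/R^2$ from the choice of $K$ in the test function --- harmless for the Liouville conclusion, but your heuristic should be checked against that computation. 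Your alternative parabolicity finish for $n=2$ is correct but still requires the gradient bound, so it does not shortcut the main difficulty.
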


In the same spirit, an interesting question is to study
those complete embedded minimal hypersurfaces in $M \times \R$, whose
angle function $\langle N, \frac{\partial}{\partial t}\rangle$ does
not change sign;  see \cite{ER}.

\begin{definition} \label{def1.2} $M$ in Theorem \ref{th1.1} is recurrent means that
for any nonempty bounded open set  $U$, 
every bounded harmonic function on $M\setminus U$ 
is determined by its boundary values. Furthermore, 
if $M\setminus U$ is quasi-isometric to $N\setminus V$,
then $M$ is recurrent if and only if $N$ is recurrent.  
For a detailed discussion see \cite{Gr2, Li_12}.
\end{definition}

\begin{example} Some interesting examples of allowable $M$ may be
  constructed as follows.  Let $N$ be a closed manifold and take $M = N
  \times \R^2$, or $M = N\times \R$, or $M = N\times S$, $S$ a
  complete surface with quadratic area growth or finite total curvature.
  These examples have quadratic volume growth so they are recurrent.
  Thus removing a bounded non-empty open set from $M$, then
  what is left is parabolic, i.e any bounded harmonic function is
  determined by its boundary values.
\end{example}

\section{Local formulas for minimal graphs}
\label{sec2}
Let $u$ be the height function of an $n$ dimensional minimal graph
$S=\{(x,u(x)): x\in B_R(p)\}$ in $M^n \times \R$ where $M$ is complete with
nonnegative Ricci curvature and $B_R(p)$ is a geodesic ball of radius
$R$ about $p$.  If $ds^2=\sigma_{ij}dx_i dx_j$ is a local Riemannian
metric on $M$, then $M\times\R$ is given the product metric $ds^2+dt^2$
where $t$ is a coordinate for $\R$.  Then the height function $u(x)\in
C^2(\Omega)$ satisfies the divergence form equation
\begin{equation}\label{eq1}
\text{div}^M\bigg(\frac{\nabla^M u}{\sqrt{1+|\nabla^M u|^2}}\bigg) =
0\,
\end{equation}
where the divergence and gradient $\nabla^Mu$ are taken with
respect to the metric on $M$.  Equivalently, equation (\ref{eq1}) can be
written in non-divergence form 
\be \frac1{W}g^{ij}D_i D_j u=0\, , \qquad \text{where}\ W=\sqrt{1+|\nabla^M
  u|^2}\, ,
\label{eq2} \ee
$D$ denotes covariant differentiation on M and 
\[g^{ij}=\sigma^{ij}-\frac{u^i u^j}{W^2}\;,\; u^i=\sigma^{ij}D_j u\, .\]

This can be seen as follows.
 Let $ x_1,\ldots x_n$ be a system of local coordinates for M with corresponding metric $\sigma_{ij}$.
Then the coordinate vector fields for S 
and the upward unit normal  to S is given by
\be
X_i=\frac{\partial}{\partial x_i}+u_i \frac{\partial}{\partial t}
\label{eq3}
\ee
and
\be
N=\frac1{W}\Big(-u^j \frac{\partial}{\partial x_j}+
\frac{\partial}{\partial t}\Big)\, ,\ u^i=\sigma^{ij}u_j~.
\label{eq4}
\ee
The induced metric on S is then 
\be
g_{ij}=\langle X_i,X_j \rangle=\sigma_{ij}+u_iu_j  
\label{eq5}
\ee
with  inverse
\be
g^{ij}=\sigma^{ij}-\frac{u^i u^j}{W^2}~. 
\label{eq6}
\ee
It is easily seen that 
\be
g=\det(g_{ij})=\sigma W^2~,~\sigma=\det(\sigma_{ij})~. 
\label{eq7}
\ee

The second fundamental form $b_{ij}$ of S is given by ($\overline{D}$ is covariant differentiation
on $M\times \R$)
\be \begin{split}
b_{ij}&= \big\langle\overline{D}_{X_i}X_j,\nu\big\rangle
= \Big\langle D_{\frac{\partial}{\partial x_i}}\frac{\partial}{\partial x_j}+
u_{ij}\frac{\partial}{\partial t}, N \Big\rangle\\
&= \Big\langle\Gamma^k_{ij}\frac{\partial}{\partial x_k}+u_{ij}\frac{\partial}{\partial t},\nu\Big\rangle
=\frac1W\Big(-\Gamma^k_{ij}u^{l}\sigma_{kl}+u_{ij}\Big)\, .
\end{split}
\ee

Hence,
\be
b_{ij}=\frac{D_iD_ju}{W}
\label{eq8}
\ee
and so the  mean  curvature $H$ of $S$ is then given by
\be
nH=\frac1W g^{ij}D_i D_j u~.
\label{eq9}
\ee

The area functional of $S$ is given in local coordinates by
\[A(S)=\int W  \sqrt{\sigma}~dx~.\]
As a functional of $u$, this gives the 
Euler-Lagrange equation
\be \label{eq9.5}
 \text{div}^M\bigg(\frac{Du}{W}\bigg)=\frac1{\sqrt{\sigma}}D_i\bigg(\sqrt{\sigma}\frac{u^i}{W}\bigg)=0\,
 .
\ee
It is easily seen that (\ref{eq2}) is the non-divergence form of (\ref{eq9.5}).\\

We will also need the  well known formulae 
\begin{align}
\label{eq10} \Delta^S\, u&=0\\
\Delta^S\, W^{-1}&=-\big(|A|^2+\widetilde{\text{Ric}}(N,N)\big) W^{-1}\, ,
\label{eq11}
\end{align}
where $|A|$ is the norm of the second fundamental form of $S$, $\widetilde{\text{Ric}}$ is the Ricci curvature of $M\times \R$,
and $\Delta^{S}$ is the
Laplace-Beltrami operator of $S$ given in local coordinates by
\be
\Delta^S\equiv \text{div}^{S}\big(\nabla^{S}\cdot\big)=\frac1{\sqrt{g}}D_i\big(\sqrt{g}g^{ij}D_j\cdot\big)=g^{ij}D_i D_j~.
\label{eq12}
\ee

Since $\tau:=\frac{d}{dt}$ is a Killing vector field on $M\times \R,\,
W^{-1}=\langle N,\tau \rangle$ is a Jacobi field and so satisfies the
Jacobi equation \eqref{eq11}.  For a clean derivation of (\ref{eq11})
using moving frames see \cite[section 2]{SY} where M is three
dimensional but the derivation is valid in all dimensions. Equation
(\ref{eq10})
is easily seen to be equivalent to (\ref{eq2}).\\

From \eqref{eq12} follows the important 
formulae
\be
\Delta^S \phi(x)=g^{ij}D_iD_j~\phi
\label{eq12.1}
\ee
and
\be
\Delta^{S}~g(\phi)=g'(\phi)\Delta_S~\phi +g''(\phi)g^{ij}D_i\phi D_j \phi~.
\label{eq12.2}
\ee

This implies that

$$\Delta^S W = 2W^{-1}|\nabla^S W|^2 + W
\big(|A|^2+\widetilde{\text{Ric}}(N,N)\big)\, .$$

Let us for the moment assume that at a point $p \in S$ the normal 
$N$ is not equal to $\tau$. We let
$$\gamma := \frac{p^{TM}(N)}{|p^{TM}(N)|}\, ,$$
where $p^{TM}$ is the projection to the tangent space of the
horizontal plane through $p$ in $M\times \R$. It then holds that
$$ \widetilde{\text{Ric}}(N,N) = \text{Ric}^M(p^{TM}(N),p^{TM}(N))
= \big(1-W^{-2}\big)\text{Ric}^M(\gamma,\gamma)\, .$$
Noting that this is still trivially true if $N=\tau$, we
arrive at
\be \label{eq13}
\Delta^S W = 2W^{-1}|\nabla^S W|^2 + W
|A|^2+W\big(1-W^{-2}\big)\text{Ric}^M(\gamma,\gamma)~.
\ee

Now let $h(x)=\eta(x) W(x)$ with $\eta \geq 0$ smooth. 
Then using \eqref{eq13}, a simple computation gives

\begin{align} \label{eq13.1}
\nonumber 
Lh &:= \Delta^S h-2g^{ij}\frac{D_i W}{W}D_j h \\
&=\eta\big(\Delta^S W-\frac2W g^{ij}D_i W D_j W\big)+
W\Delta^S \eta\\ \nonumber
&= W\big(\Delta^S \eta + \eta \big(|A|^2+(1-W^{-2})\text{Ric}^M(\gamma,\gamma)\big)\big)~.
\end{align}

\section{The recurrent case}
\label{sec3}
The original proof by Hoffman-Meeks  of the half-space theorem in $\R^3$
used the family of minimal surfaces obtained from a catenoid by
homothety. We will use a discrete family of minimal graphs in $M\times
\R$, like the catenoids in $\R^3$. 

Let $D_1 \subset M$ be open and bounded with $\partial D_1$
smooth. Since $M$ has bounded sectional curvatures, we can apply
Theorem 0.1 of Cheeger and Gromov \cite{CheegerGromov_91} to assert
the existence of an exhaustion of $M$, $D_1\subset D_2\subset \cdots
\subset D_n\subset \cdots$ by domains with smooth boundaries, such
that the norm of the second fundamental form of the boundaries
$\partial D_i$ is uniformly bounded by $C_1$ and $\bar D_i \subset
D_{i+1}$. We denote $\partial D_n$ by $\partial_n$ and by $A_n$ the
annular-type domain $D_n\setminus \ol{D_1}$, with $\partial A_n
= \partial_1 \cup \partial_n$.

$A_n$ is a stable minimal hypersurface of $M\times \R$ (it is totally
geodesic) so any sufficiently small smooth perturbation of $\partial
A_n$ to $\Gamma_{n,t}$ gives rise to a smooth family
of minimal hypersurfaces $S_{n,t}$ with $\partial
S_{n,t}=\Gamma_{n,t}$, and $S_{n,0} = A_n$. The $S_{n,t}$
are smooth up to their boundary (we will use $C^2$).

We apply this to the deformation of $\partial A_n$ which is the graph
over $\partial A_n$ given by $\partial_1 \cup (\partial_n \times
\{t\})$, for $t\geq 0$. 
Then for $t$ sufficiently small, $S_{n,t}$ is the graph of a
function smooth $u_{n,t}$ defined on $A_n$, with boundary values $0$ on
$\partial_1$ and $t$ on $\partial_n$. Note that $u_{n,t}$ satisfies
the minimal surface equation
on $A_n$ and  by the maximum principle we have $0 \leq u_{n,t}\leq t$.
Furthermore, as long as
$|\nabla^M u_{n,t}|$ is uniformly bounded, the DeGiorgi-Nash-Moser and
Schauder estimates imply uniform estimates for all higher derivatives up to the
boundary. Thus to apply the method of continuity,  we  need only show
uniform gradient estimates. 

We will first present a maximum principle for the function 
$$ W =   \sqrt{1+|\nabla^M u|^2}$$
on $S=\text{graph}(u)\subset M \times \R$, where we assume that
$u:\Omega\rightarrow \R$ is a solution to the minimal surface equation
on $\Omega \subset M$. From \eqref{eq13}, we see that if the Ricci
curvature of $M$ is nonnegative then $W$ is bounded on $S$ by its
maximum on $\partial S$. To treat the case that the Ricci curvature of
$M$ is only bounded from below we consider the function
$$ h= \eta \cdot W\,,\ \  \eta=e^{\alpha u} $$
where $\alpha>0$. From \eqref{eq10}, \eqref{eq12.2}
we find
$$\Delta^S \eta=\alpha^2 \eta |\nabla^S u|^2=\alpha^2(1-W^{-2})\eta~.$$
Then using \eqref{eq13.1} we have
\begin{equation} \label{eq13.2}
  Lh= h\Big(|A|^2 +  \big(1-W^{-2}\big)\big(\alpha^2+\text{Ric}^M(\gamma,\gamma)\big)\Big) \, .
\end{equation}
This implies the following estimate.
\begin{lemma}\label{lem3.1}
Let $\Omega \subset M$ be open and bounded and let $u\in C^2(\Omega)\cap C^1(\bar{\Omega})$ be
a solution of the minimal surface equation in $\Omega$. Then
$$\sup_{\Omega}\sqrt{1+|\nabla^M u|^2} \leq \sup_{\Omega}e^{-\alpha u}
\cdot \sup_{\partial \Omega}\bigg(e^{\alpha u} \sqrt{1+|\nabla^M
  u|^2}\bigg)\, , $$
where $\alpha^2 = \sup\{\max\{-\text{Ric}^M(\gamma,\gamma), 0\}\, |\, \gamma
  \in T_pM,\, |\gamma|=1, \, p \in \Omega \}$. 
\end{lemma}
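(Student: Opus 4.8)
The plan is to apply the maximum principle to the operator $L$ from \eqref{eq13.1} acting on $h = \eta W$ with $\eta = e^{\alpha u}$, where $\alpha$ is chosen as in the statement so that $\alpha^2 + \text{Ric}^M(\gamma,\gamma) \geq 0$ pointwise on $\Omega$ for every unit vector $\gamma$. The key observation is \eqref{eq13.2}: with this choice of $\alpha$, the right-hand side $h\big(|A|^2 + (1-W^{-2})(\alpha^2 + \text{Ric}^M(\gamma,\gamma))\big)$ is nonnegative, since $|A|^2 \geq 0$, $1 - W^{-2} \geq 0$, and the last factor is nonnegative by the choice of $\alpha$. Hence $Lh \geq 0$, i.e. $h$ is a subsolution of the linear, second-order, uniformly elliptic operator $L$ on the graph $S = \text{graph}(u)$.

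The next step is to invoke the weak maximum principle for $L$ on $S$. Since $L = \Delta^S - 2g^{ij}(D_iW/W)D_j$ has no zeroth-order term and is uniformly elliptic on the compact manifold-with-boundary $\bar S$ (here we use $u \in C^2(\Omega)\cap C^1(\bar\Omega)$, so $W$ and the coefficients of $L$ are continuous up to $\partial S$ and $S$ is relatively compact), a $C^2$ subsolution attains its maximum on the boundary: $\sup_{\bar S} h \leq \sup_{\partial S} h$. Translating back via $h = e^{\alpha u}W$ and $W = \sqrt{1+|\nabla^M u|^2}$, and using the correspondence between points of $S$ and points of $\Omega$, this reads
\begin{equation*}
\sup_\Omega\Big(e^{\alpha u}\sqrt{1+|\nabla^M u|^2}\Big) \leq \sup_{\partial\Omega}\Big(e^{\alpha u}\sqrt{1+|\nabla^M u|^2}\Big).
\end{equation*}
Finally, bounding $\sqrt{1+|\nabla^M u|^2} = e^{-\alpha u}\cdot\big(e^{\alpha u}\sqrt{1+|\nabla^M u|^2}\big) \leq \big(\sup_\Omega e^{-\alpha u}\big)\cdot\big(\sup_{\partial\Omega}e^{\alpha u}\sqrt{1+|\nabla^M u|^2}\big)$ and taking the supremum over $\Omega$ on the left gives exactly the claimed inequality.

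The main technical point to be careful about is the validity of the maximum principle itself: one must check that $L$ is genuinely uniformly elliptic on $\bar S$ with bounded, continuous coefficients. Ellipticity is clear since $(g^{ij})$ is positive definite, but uniformity and boundedness of the first-order coefficient $-2g^{ij}D_iW/W$ require that $W$ stay bounded away from zero (true, $W \geq 1$) and that $|\nabla^S W|$ be controlled; this follows because $u\in C^2(\Omega)$ gives $W\in C^1(\Omega)$, and relative compactness of $S$ together with $C^1$ control up to $\partial\Omega$ makes everything bounded on $\bar S$. One should also note that when $N = \tau$ somewhere (so $\gamma$ is undefined pointwise), the formula \eqref{eq13} — hence \eqref{eq13.2} — was already observed to hold trivially there, and the value of $\alpha^2$ as a supremum is unaffected; so the subsolution inequality $Lh \geq 0$ holds on all of $S$ regardless. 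No delicate obstacle remains beyond this bookkeeping — the estimate is essentially an integration of the Jacobi-type identity \eqref{eq13} against the weight $e^{\alpha u}$ via the maximum principle.
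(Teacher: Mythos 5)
Your proof is correct and follows essentially the same route as the paper: use \eqref{eq13.2} with the weight $\eta=e^{\alpha u}$ and the choice of $\alpha$ to get $Lh\geq 0$, then conclude by the maximum principle for the zeroth-order-free operator $L$ on the graph and unwind $h=e^{\alpha u}W$. The only difference is that you spell out the elementary bookkeeping (ellipticity, the final factorization through $\sup_\Omega e^{-\alpha u}$) that the paper leaves implicit.
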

\begin{proof} By our choice of $\alpha>0$,  we see from \eqref{eq13.1} that
  $Lh \geq 0$. The result now follows from the maximum principle.
\end{proof}
\begin{remark}
  In the case that $S$ has constant mean curvature $H$ one can
  compute that
$$  Lh= h\Big(\alpha \frac{nH}W+|A|^2 +  \big(1-W^{-2}\big)\big(\alpha^2+\text{Ric}^M(\gamma,\gamma)\big)\Big) \, .
    $$
By considering $-u$ instead of $u$  if necessary, 
 we can assume that $H\geq 0$ and  arrive at the same
gradient estimate as before.
\end{remark}

Lemma \ref{lem3.1} implies that to use the method of continuity for the
surfaces $S_n(t)$ we only need a priori gradient bounds on
$\partial A_n$. 

For convenience of notation, assume the sectional curvatures of $M$
are bounded from above by $K_0=1$. Then the Riccati comparison
estimates imply that for any point $p$ in $M$, the exponential map
$\text{exp}_p:T_pM\supset B_\pi(0) \rightarrow B_{\pi }(p)$ is a local
diffeomorphism. Let us for the moment also assume that the injectivity
radius of $M$ is greater or equal to $1$, i.e. the exponential map
$\text{exp}_p:T_pM\supset B_1(0) \rightarrow B_1(p)$ is actually a
diffeomorphism.

We now almost explicitly construct a catenoid like supersolution $w=w(r;r_0,p)$
of the minimal surface equation in an annulus $A(p):= B_{4r_0}(p)
\setminus B_{2r_0}(p)$
of height $2\delta_0$ where $r=d(x,p)$ is the distance function from x to p.
Here $r_0$ will be chosen sufficiently small depending on the  bound $K_0=1$ for sectional
curvature of M and the lower bound 1 for the  injectivity radius of M.

\begin{lemma} \label{lem3.2}
For $r_0$ sufficiently small, there exists
$w=\phi(r)-\phi(2r_0)$ satisfying 
\begin{align}
&\text{div}^M\bigg(\frac{\nabla^M w}{\sqrt{1+|\nabla^M w|^2}}\bigg) <0
\ \ \text{in}\ A(p)\\
&w=0 \hspace{.1in} \text{on $r=2r_0$}\\
&w=2\delta_0:=\phi(4r_0)-\phi(2r_0) \hspace{.1in} \text{on $r=4r_0$}
\end{align}
where $\phi'(r)>0,\, \phi(r_0)=0,\, \phi'(r_0)=+\infty$ and the inverse function $ r=\gamma(s)$  of $\phi(r)$ is implicitly defined  by
\be \label{eq3.05}
s=\int_{r_0}^{\gamma} \frac{dt}{\sqrt{(\frac{t}{r_0})^{2n}-1}}
\ee
\end{lemma}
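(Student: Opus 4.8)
The plan is to construct $\phi$ explicitly via its inverse function and then verify the differential inequality by a direct computation. First I would observe that for a radial function $w = \phi(r)$ on the annulus, the minimal surface operator in the ambient metric splits into a Euclidean-looking ODE part plus an error term controlled by the mean curvature of geodesic spheres. Precisely, writing $H_{\mathrm{sph}}(r)$ for the mean curvature of $\{r = \mathrm{const}\}$ (with respect to the outward normal), one has
\begin{equation*}
\mathrm{div}^M\!\left(\frac{\nabla^M w}{\sqrt{1+|\nabla^M w|^2}}\right) = \frac{d}{dr}\!\left(\frac{\phi'(r)}{\sqrt{1+\phi'(r)^2}}\right) + (n-1)\,H_{\mathrm{sph}}(r)\,\frac{\phi'(r)}{\sqrt{1+\phi'(r)^2}}\, ,
\end{equation*}
so it suffices to make the bracketed flux term decrease fast enough to absorb the spherical term. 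The natural ansatz, mimicking the Euclidean catenoid where the flux $\phi'/\sqrt{1+\phi'^2}$ equals $(r_0/r)^{n-1}$, is to demand here that this flux equal $(r_0/r)^{n}$ — one extra power of decay to create strict room. Inverting $\phi'/\sqrt{1+\phi'^2} = (r_0/r)^n$ gives $\phi'(r) = \big((r/r_0)^{2n}-1\big)^{-1/2}$, which is exactly the integrand in \eqref{eq3.05}; this yields $\phi'(r_0) = +\infty$, $\phi'(r) > 0$, and the implicit definition of $r = \gamma(s)$ as stated.

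Next I would compute the operator on this $w$. With the flux chosen as $(r_0/r)^n$, its $r$-derivative is $-n\,r_0^n r^{-n-1}$, so the operator becomes
\begin{equation*}
-n\,\frac{r_0^n}{r^{n+1}} + (n-1)\,H_{\mathrm{sph}}(r)\,\frac{r_0^n}{r^n}\, .
\end{equation*}
On the annulus $A(p) = B_{4r_0}(p)\setminus B_{2r_0}(p)$ we have $r \ge 2r_0$, and since the sectional curvatures of $M$ are bounded (by $K_0 = 1$ after normalization), the Riccati comparison gives $|H_{\mathrm{sph}}(r)| \le C$ for $r \le 4r_0$ with $C$ depending only on $K_0$ and a lower injectivity-radius bound, uniformly in $p$. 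Hence the operator is bounded above by $r^{-n}\big(-n r_0^n/r + (n-1)C r_0^n\big) \le r_0^{n-1} r^{-n}\big(-n/4 + (n-1)C r_0\big)$, which is strictly negative once $r_0 < n/(4(n-1)C)$. This gives the supersolution inequality.

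Finally I would record the boundary normalization: setting $w = \phi(r) - \phi(2r_0)$ makes $w = 0$ on $r = 2r_0$, and the value on $r = 4r_0$ is $2\delta_0 := \phi(4r_0) - \phi(2r_0) = \int_{2r_0}^{4r_0}\phi'(t)\,dt$, which is finite and positive; one should note $2\delta_0$ depends on $r_0$ (and will be taken small along with $r_0$). I expect the main technical obstacle to be the uniform-in-$p$ curvature control of $H_{\mathrm{sph}}(r)$: one must ensure that on the small annulus $A(p)$ the distance function $r = d(\cdot,p)$ is smooth (no cut locus, guaranteed by the injectivity-radius assumption already imposed in the text) and that the Hessian comparison bound on $H_{\mathrm{sph}}$ holds with a constant independent of the center $p$ — this is where the hypotheses "$|K_\pi| \le K_0$" and "injectivity radius $\ge 1$" are genuinely used, and it forces the smallness threshold for $r_0$.
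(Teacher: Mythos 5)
Your construction of $\phi$ is exactly the paper's: the flux ansatz $\phi'/\sqrt{1+\phi'^2}=(r_0/r)^n$, one power of decay more than the Euclidean catenoid over $\R^n$, reproduces \eqref{eq3.05}, and your radial decomposition of the operator is just the divergence-form version of the paper's identity $Mw=\phi'(r)\Delta^M r+\phi''(r)/(1+\phi'^2(r))$. The genuine gap is in the verification of negativity. The claim that $|H_{\mathrm{sph}}(r)|\le C$ for $r\le 4r_0$ with $C$ depending only on $K_0$ and the injectivity radius, uniformly in $r_0$, is false: small geodesic spheres have large mean curvature. Under $|K_\pi|\le 1$ the Riccati/Hessian comparison gives $(n-1)\cot r\le \Delta^M r=(n-1)H_{\mathrm{sph}}(r)\le (n-1)\coth r$, so on the annulus $2r_0\le r\le 4r_0$ the best available constant is of size $C\approx 1/(2r_0)$, not an absolute constant. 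Inserting $C\sim 1/(2r_0)$ into your final estimate, the bracket $-n/4+(n-1)Cr_0\approx -n/4+(n-1)/2$ is nonnegative for every $n\ge 2$, so your smallness condition $r_0<n/(4(n-1)C)$ can never be met and the displayed bound does not yield the strict inequality.

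The repair is to keep the $1/r$ structure instead of absorbing the spherical term into a constant: what is needed is the pointwise inequality $(n-1)H_{\mathrm{sph}}(r)=\Delta^M r<n/r$ on $B_{4r_0}(p)$, which is precisely the condition the paper imposes by fixing $r_0$ small (for $|K_\pi|\le 1$ and $r$ within the injectivity radius, $(n-1)\coth r<n/r$ holds as soon as, roughly, $(n-1)r^2/3<1$). With that inequality your own computation closes: the operator equals $-n\,r_0^n r^{-n-1}+\Delta^M r\,(r_0/r)^n< r_0^n r^{-n}\big(\Delta^M r-n/r\big)\cdot 1<0$ on $A(p)$, which is the paper's argument, where the choice of the exponent $n$ rather than $n-1$ is exactly what produces the margin $n/r-(n-1)/r=1/r$ needed to dominate the curvature correction $(n-1)\sqrt{K_0}\,r/3+O(r^3)$; it is not there merely to beat a bounded error term. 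Your remarks about smoothness of $r=d(\cdot,p)$ inside the injectivity radius and about the boundary normalization $w=\phi(r)-\phi(2r_0)$, $2\delta_0=\phi(4r_0)-\phi(2r_0)$, are fine as stated.
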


\begin{proof} From \eqref{eq2} it suffices to show that in $A(p)$
  \be \label{eq3.10} Mw:=\Big(\sigma^{ij}-\frac{w^i w^j}{W^2}\Big)D_i
  D_j w<0\, ,\ \ \text{ where}\ W=\sqrt{1+|\nabla^M w|^2}\, .  \ee
  When $w=\phi(r)$ we easily find from \eqref{eq3.10} that
  \be \label{eq3.20} Mw= \phi'(r) \Delta^M r +
  \frac{\phi''(r)}{1+\phi'^2(r)} \ee We fix $ r_0$ small enough that
  $\Delta^M r <\frac{n}r$ in $B_{4r_0}(p)$. Then from \eqref{eq3.20},
  \be\label{eq3.30} Mw< \frac{\phi''(r)}{1+\phi'^2(r)}+\frac{n}r
  \phi'(r) \ee and it suffices to solve \be \label{eq3.40}
  \frac{\phi''}{1+\phi'^2}+\frac{n}r \phi'=0 \ee But \eqref{eq3.40} is
  the ode for the height function of the top half of the catenoid in
  $\bfR^{n+1} \times \bfR$ over $\{r>r_0\} \subset \bfR^{n+1}$ and its
  solution is well known to be given as described.
\end{proof}

\begin{remark}\label{rem3.1} Using the continuity method, it is immediate that we  can deform $w$ to an exact solution of the minimal surface equation in $A(p)$.
\end{remark}

We now use the barrier $Z_{r_0,p}=\text{graph}(w)$ near the boundary
of $A_n$ to obtain a gradient bound for $S_n(t)$, provided $0 \leq
t\leq \delta_0$.  Let $p_0 \in \partial A_n$. Since the norm of the
second fundamental form of each component $\partial A_n$ is bounded by
$C_1$ there is a $p_1\in M$ such for $r_0$ sufficiently small
depending only on $C_1$, $B_{2r_0}(p_1)$ touches $A_n$ from the
outside at $p_0$. Note that $B_{2r_0}(p_1)$ still might intersect
$A_n$, but it touches $A_n$ in $p_0$ from the outside. We now consider
the part of $Z_{r_0,p_1}$ which is a graph over the connected
component of $(B_{4 r_0}(p_1)\setminus B_{2 r_0}(p_1))\cap A_n$ which
has $p_0$ in its boundary. Suppose first $p_0 \in \partial_1$. Note
that on its boundary $Z_{r_0,p_1}$ always lies above $S_{n,t}$, as
long as $0\leq t \leq \delta_0$. By the maximum principle this implies
that $Z_{r_0,p_1}$ lies above $S_n(t)$, which in turn implies a
gradient bound for $u_{n,t}$ at $p_0$. By reflecting $Z_{r_0,p}$ at
the plane of height $0$ in $M\times \R$ and translating up by $t$, we
can do a similar construction at the outer boundary $\partial_n$ of
$A_n$ for $S_{n,t}$ and obtain a gradient bound for $u_{n,t}$ which is
uniform in n and t.

In the construction above, we have assumed that the injectivity radius
of $M$ is bounded from below by $1$. In the case that there is no
positive lower bound for the injectivity radius of $M$, we proceed as
follows. As pointed out earlier, $\text{exp}_p:T_pM\supset B_\pi(0)
\rightarrow B_\pi(p)$ is a local diffeomorphism. Thus we can pull back
the metric of $M$ to $B_\pi(0) \subset T_pM$. It is then easy to see
that $\text{exp}_p:T_pM\supset B_2(0) \rightarrow B_2(p)$ is a local
Riemannian covering map, and the injectivity radius at $0$ of
$B_\pi(0)\subset T_pM$ is $\pi$.  To obtain the gradient bounds at
$p_0 \in \partial A_n$ as discussed above, we can lift the whole
construction, including $A_n$ and $S_{n,t}$ locally to $B_1(0)\subset
T_pM$ and again use $Z_{r_0,p}$ to obtain the same gradient bound for
the lift of $u_{n,t}$. But this implies the gradient bound for
$u_{n,t}$ itself. This gives

\begin{lemma}
  \label{lem3.3}
  For every $0\leq t\leq \delta_0$ the surfaces $S_{n,t}$ exist and
  are smooth graphs of $u_{n,t}$ over $\ol{A}_n$ satisfying
\begin{align}
 \label{eq3.50} &  \, 0< u_{n,t} <t \hspace{.1in}\mbox{in $A_n$}\\
 \label{eq3.60} & \,|\nabla^M u_{n,t}| \leq C_3 \hspace{.1in} \mbox{on $\ol{A}_n$ }
\end{align}
for all $0\leq t \leq \delta_0$ and $n \in \N$, with $C_3$ independent
of $n$ and $t$.
\end{lemma}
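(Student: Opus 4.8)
The plan is to run the method of continuity for the family $S_{n,t}$, $t \in [0,\delta_0]$, where $n$ is fixed throughout. Let $I \subset [0,\delta_0]$ be the set of $t$ for which a smooth minimal graph $u_{n,t}$ over $\ol{A}_n$ with boundary data $0$ on $\partial_1$ and $t$ on $\partial_n$ exists, satisfying the strict bounds \eqref{eq3.50}. The set $I$ is nonempty since $0 \in I$ (take $u_{n,0} \equiv 0$; the strict inequality \eqref{eq3.50} is vacuous there, and for small $t>0$ the surfaces $S_{n,t}$ exist by the perturbation remark at the start of Section \ref{sec3}). Openness of $I$ is standard: the linearized minimal surface operator on the stable (totally geodesic) background is invertible with zero Dirichlet data by stability, so the implicit function theorem in the appropriate Hölder spaces produces nearby solutions, and the strict maximum principle keeps $0 < u_{n,t} < t$ in $A_n$ under small perturbations of $t$. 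So the crux is closedness, i.e. a priori estimates.

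For closedness, suppose $t_k \in I$ with $t_k \to t_* \in [0,\delta_0]$. First, the maximum principle applied to \eqref{eq1} (or equivalently to \eqref{eq10}, since $u_{n,t_k}$ is harmonic on $S_{n,t_k}$) gives $0 \le u_{n,t_k} \le t_k \le \delta_0$, and the strong maximum principle upgrades this to strict inequality in the interior. Second, and this is the main point, I would obtain the uniform interior and boundary gradient bound \eqref{eq3.60}. The interior bound follows from Lemma \ref{lem3.1}: on any ball $B_r(q) \subset A_n$, $\sup_{B_{r/2}(q)} W$ is controlled by $\sup_{\partial B_r(q)} W$ times a dimensional constant depending on $\alpha$ (which here is fixed by the curvature lower bound $K_0$), so it suffices to bound $|\nabla^M u_{n,t_k}|$ on $\partial A_n = \partial_1 \cup \partial_n$. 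This boundary gradient bound is exactly what the catenoid barriers $Z_{r_0,p_1}$ from Lemma \ref{lem3.2} provide: at each $p_0 \in \partial A_n$, the uniform bound $C_1$ on the second fundamental form of $\partial A_n$ produces an exterior tangent ball $B_{2r_0}(p_1)$ with $r_0$ depending only on $C_1$ (and on $K_0$ and the injectivity radius, or on $K_0$ alone after passing to the local cover $B_1(0) \subset T_pM$ as in the last paragraph of the preceding discussion); the graph $w = \phi(r) - \phi(2r_0)$ lies above $S_{n,t_k}$ on the boundary of the relevant component of $(B_{4r_0}(p_1) \setminus B_{2r_0}(p_1)) \cap A_n$ precisely because $0 \le u_{n,t_k} \le \delta_0 = w|_{r = 4r_0}$, and since $w$ is a supersolution the comparison principle forces $w \ge u_{n,t_k}$ throughout that component, pinching the two graphs together at $p_0$ and hence bounding $|\nabla^M u_{n,t_k}(p_0)|$ by a constant depending only on $r_0$, $\delta_0$, and the barrier. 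At the outer boundary $\partial_n$ one reflects $Z_{r_0,p}$ across height $0$ and translates up by $t_k$; since $t_k \le \delta_0$ the same argument applies. All constants produced this way are independent of $n$ and of $t_k$, giving \eqref{eq3.60} with a uniform $C_3$.

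With \eqref{eq3.60} in hand, the minimal surface equation \eqref{eq1} becomes uniformly elliptic with bounded coefficients, so DeGiorgi--Nash--Moser gives interior $C^{1,\alpha}$ bounds and Schauder theory bootstraps to uniform $C^{2,\alpha}$ (indeed $C^k$ for all $k$) estimates on $\ol{A}_n$, up to the boundary since the boundary data and $\partial A_n$ are smooth. Hence a subsequence of $u_{n,t_k}$ converges in $C^2(\ol{A}_n)$ to a solution $u_{n,t_*}$ with the correct boundary values, and the limiting strict inequality $0 < u_{n,t_*} < t_*$ in $A_n$ holds by the strong maximum principle (using $t_* \le \delta_0 > 0$ when $t_* > 0$; if $t_* = 0$ the statement is the trivial slice). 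Thus $t_* \in I$, $I$ is closed, and therefore $I = [0,\delta_0]$, which is the assertion. The one genuine obstacle in this argument is the uniform boundary gradient estimate, and it is resolved exactly by the catenoid barrier construction of Lemma \ref{lem3.2} together with the curvature-controlled exterior ball condition on $\partial A_n$ coming from the Cheeger--Gromov exhaustion; everything else is the standard continuity-method packaging.
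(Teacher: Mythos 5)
Your proposal is correct and follows essentially the same route as the paper: height bounds by comparison with slices, boundary gradient bounds from the catenoid-type barriers of Lemma \ref{lem3.2} via the uniform exterior ball condition (and the local cover when the injectivity radius is small), the global gradient bound from Lemma \ref{lem3.1}, then DeGiorgi--Nash--Moser and Schauder estimates feeding the method of continuity, whose openness/closedness packaging you merely spell out more explicitly than the paper does. Only cosmetic slips: the barrier's outer boundary value is $2\delta_0=\phi(4r_0)-\phi(2r_0)$, not $\delta_0$ (the comparison still works since $u_{n,t}\le t\le\delta_0$), and Lemma \ref{lem3.1} is applied once on all of $A_n$ (the factor $e^{\alpha u}$ being controlled by $0\le u\le\delta_0$) rather than ball-by-ball as an interior estimate.
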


\begin{proof}
  By comparing with planes of constant height zero and $\delta_0$, the
  height of the surfaces $S_{n,t}$ is bounded from below by zero and
  from above by $\delta_0$. The above construction of barriers at the
  boundary implies that
  $$|\nabla^M u_{n,t}| \leq |\nabla^M w|=\phi'(2r_0)=C_2$$
  on $\partial A_n$, independent of $n$ and $t$. By Lemma
  \ref{lem3.1}, this implies the stated a priori gradient bound for
  $u_{n,t}$ on $\ol{A}_n$. The DeGiorgi-Nash-Moser and Schauder
  estimates then imply a priori bounds of all higher derivatives of
  $u_{n,t}$ on $\ol{A}_n$. Thus we obtain existence by the method of
  continuity.
 \end{proof}

\begin{remark}
  Note that to get the existence of the surfaces $S_{n,t},\, 0\leq
  t\leq \delta$ just for an implicit $0<\delta\leq \delta_0$, one can
  argue that by the stability of $S_{1,0}$, the graphs $S_{1,t}$ exist
  for $t \in [0,\delta]$ and have bounded gradient. One can then use
  $S_{1,\delta}$ as an upper barrier for the surfaces $S_{n,t}$ on the
  inner boundary $\partial_1$ to obtain an a priori gradient estimate
  there.
\end{remark}

By construction, we have that $S_{n,t}$ lies above $S_{m,t}$ on $A_n$
for $m > n$. Since for $0\leq t \leq \delta_0$ the surfaces have
uniform gradient bounds, the DeGiorgi-Nash-Moser and Schauder
estimates imply locally uniform estimates for all higher
derivatives. We fix $t\in (0,\delta_0]$ and take the limit
$n\rightarrow \infty$ of the surfaces $S_{n,t}$ to obtain a limit
surface S, which is a minimal graph over $M\setminus D_1$ and has
boundary value $0$ on $\partial_1$.  Furthermore, the height function
$u$ is bounded by $\delta_0$ and the gradient of $u$ by $C$.

Since the gradient of $u$ is bounded, $S=\text{graph}(u)$ is
quasi-isometric to $M\setminus D_1$, hence it is parabolic. Thus the
height function $u$ on $S$ is a bounded harmonic function on
$\text{graph}(u)$ and so must be constant, equal to zero. That is $u
\equiv 0$ and the graphs $S_{n,t}$ converge locally uniformly to zero.

Now we can prove the half-space theorem.

\begin{proof}[Proof of Theorem \ref{th1.1}]
  Suppose $S$ is a minimal hypersurface properly immersed in $M \times
  (-\infty,c)$. Lowering $M\times \{ c\}$ until it ``touches'' $S$, we
  can suppose $S$ is asymptotic to $M\times \{c\}$ at infinity. More
  precisely, if $M\times \{\tau\}$ touches $S$ for the first time at
  some point of $S$ then $S = M\times \{\tau\}$ by the maximum
  principle and we are done. Otherwise the first contact is at
  infinity so we can assume $S$ is asymptotic to $M \times \{c\}$. By
  translating $S$ vertically we can assume that $c=0$.

  Since $S$ is proper, we can assume that there is a point $p_0\in M$
  and a cylinder $C= B_{r_0}(p_0)\times (-r_0,0)$ for some $r_0>0$
  such that $S \cap C =0$. We can assume that $r_0$ is less than the
  injectivity radius at $p$.  In our construction of the surfaces
  $S_{n,t_0}$, we choose $D_1 = B_{r_0/2}(p_0)$ and $t_0 =
  \min\{\delta_0, r_0/2\}$. Note that translating $S_{n,t_0}$
  vertically downwards by an amount $t_0$ keeps the boundaries of the
  translates of $S_{n,t_0}$ strictly above $S$. Thus by the maximum
  principle all the translates remain disjoint from $S$. We call
  $S_{n,t_0}^\prime$ this final translate. Note that all the surfaces
  $S_{n,t_0}^\prime$ lie above $S$ and converge as $n\rightarrow
  \infty$ to the plane $M\times \{-t_0\}$. Thus $S$ lies below
  $M\times \{-t_0\}$ which contradicts that $S$ is asymptotic to $M
  \times \{0\}$.
\end{proof}

\section{The graphical case}
\label{sec4}

\begin{theorem} \label{th1} Assume $M$ is complete with nonnegative
  Ricci curvature and sectional curvatures $K_{\pi} \geq -K_0$ for a
  nonnegative constant $K_0$.  Let $S=\text{graph}(u)$ be a minimal
  graph in $M \times \R $ over $B_R(p)$ with $u\geq 0$. Then
  \[ |\nabla^M u(p)| \leq C_1 e^ {C_2 u^2(p)\frac{\Psi(R)}{R^2 }} \]
  where $\Psi(R)=(n-1)\sqrt{K_0}R\coth{(\sqrt{K_0}R)}+1$.
\end{theorem}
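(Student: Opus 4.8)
The plan is to run a Bochner-type argument on the graph $S$ using the equations \eqref{eq10}--\eqref{eq13}. The natural quantity is $\log W$, which by \eqref{eq13} satisfies
\[
\Delta^S \log W = W^{-2}|\nabla^S W|^2/W^2 \cdot(\dots) \,,
\]
more precisely, dividing \eqref{eq13} by $W$ and using \eqref{eq12.2} with $g(\cdot)=\log(\cdot)$ one gets
\[
\Delta^S \log W = |A|^2 + (1-W^{-2})\,\mathrm{Ric}^M(\gamma,\gamma)\, .
\]
Since $\mathrm{Ric}^M\geq 0$ and sectional curvatures are $\geq -K_0$, the term $(1-W^{-2})\mathrm{Ric}^M(\gamma,\gamma)$ is nonnegative, so $\log W$ is a nonnegative-Laplacian (subharmonic) function on $S$ — in fact $\Delta^S \log W \geq |A|^2 \geq 0$. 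Simultaneously $u$ is harmonic on $S$ by \eqref{eq10}, and $|\nabla^S u|^2 = 1-W^{-2}$, which is bounded. The idea is to combine the subharmonicity of $\log W$ with a good cutoff supported in the intrinsic ball of $S$ lying over $B_R(p)$, and control the geometry of that intrinsic ball (volume growth, diameter) via the hypothesis $\mathrm{Ric}^M\geq 0$ together with the Ricci lower bound on $M\times\R$ coming from $K_\pi\geq -K_0$. This is where $\Psi(R)$ should enter: a Laplacian comparison / Bishop–Gromov estimate on $M\times\R$ (whose Ricci curvature is bounded below by $-(n-1)K_0$) gives bounds on the distance function on $S$ restricted to the slab, producing the factor $(n-1)\sqrt{K_0}R\coth(\sqrt{K_0}R)+1$.

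In detail, I would proceed as follows. First, establish the differential inequality $\Delta^S\log W \geq 0$ (indeed $\geq |A|^2$) as above, and record $\Delta^S u = 0$, $|\nabla^S u|^2 = 1-W^{-2}\leq 1$. Second, introduce the auxiliary function $f = \log W - \lambda u^2$ or, better, work with the standard Korevaar–type test function combining $\log W$ with a function of $u$: one wants a $\varphi = \varphi(u)$ so that $v := \varphi(u)\log W$ (or an exponential of $\log W$ damped by a function of $u$) satisfies a maximum principle on the intrinsic geodesic ball of $S$ centered at $(p,u(p))$. The key algebraic point is the Cauchy–Schwarz trick: since $\nabla^S u$ and $\nabla^S W$ need not be aligned, one uses $|\nabla^S\log W|^2$ together with the harmonicity of $u$ to absorb cross terms; the factor $u^2(p)$ in the exponent suggests taking $\varphi$ quadratic-ish in $u$ and tracking the constant carefully. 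Third, run the cutoff/maximum-principle argument on the intrinsic ball $B^S_\rho$ of $S$ projecting into $B_R(p)$: choose a cutoff $\psi$ equal to $1$ near the center and vanishing on $\partial B^S_\rho$, with $|\nabla^S\psi|\leq C/\rho$ and $|\Delta^S\psi|$ controlled via the Laplacian comparison theorem on $S$ — here the ambient Ricci lower bound $-(n-1)K_0$ plus the fact that $S$ is minimal gives $\Delta^S r_S \leq (n-1)\sqrt{K_0}\coth(\sqrt{K_0}r_S) + (\text{correction from the slab geometry})$, and comparing intrinsic with extrinsic distance inside the slab $M\times[0,u(p)+\dots]$ yields the $\Psi(R)/R^2$ scaling. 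Evaluating the maximum-principle inequality at the maximum point of $\psi^2 v$ and rearranging gives the pointwise bound $W(p)\leq C_1 \exp(C_2 u^2(p)\Psi(R)/R^2)$.

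The main obstacle I anticipate is the comparison between the \emph{intrinsic} geometry of $S$ and the \emph{extrinsic} ball $B_R(p)\subset M$: the graph over $B_R(p)$ need not contain an intrinsic ball of comparable radius unless one already controls $|\nabla^M u|$, which is exactly what we are trying to bound. The standard way around this circularity is to work on the subset of $S$ lying over a slightly smaller ball $B_{R'}(p)$ with $R'<R$, or to use a clever radial cutoff built directly from the $M$-distance $r(x)=d(p,x)$ pulled back to $S$ (so that $\nabla^S(r\circ\pi)$ is automatically bounded by $1$), and then apply the Laplacian comparison for $r$ on $M$ (using $\mathrm{Ric}^M\geq 0$, hence $\Delta^M r\leq (n-1)/r$, as already used in Lemma \ref{lem3.2}) to estimate $\Delta^S(r\circ\pi)$. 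That keeps the test function's support genuinely inside $B_R(p)$ and avoids the circularity; the price is the extra geometric factor $\Psi(R)$, which is $O(1)$ when $\sqrt{K_0}R$ is bounded and grows linearly when it is large, consistent with the stated estimate. The remaining work — choosing the exponents, bookkeeping the constants $C_1,C_2$, and verifying the cross-term absorption — is routine once the framework is in place.
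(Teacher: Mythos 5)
Your overall strategy --- a maximum-principle estimate for a test function that couples $W$ with a cutoff built from $u$ and the distance to $p$ --- is indeed the paper's approach (there the test function is $h=(e^{K\phi}-1)W$ with $\phi=\big(-u/2u(p)+1-d(x,p)^2/R^2\big)^+$, fed into the operator $L$ of \eqref{eq13.1}). But your sketch has a genuine gap at exactly the point where the hypothesis $K_\pi\geq -K_0$ and the factor $\Psi(R)$ must enter. You propose to control the Laplacian of the radial part of the cutoff either intrinsically (Laplacian comparison on $S$, or Bishop--Gromov on $M\times\R$) or by the trace comparison $\Delta^M r\leq (n-1)/r$ coming from $\mathrm{Ric}^M\geq 0$, "as in Lemma \ref{lem3.2}". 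Neither works. The intrinsic route needs a Ricci lower bound on $S$ itself, which by the Gauss equation requires control of $|A|^2$ --- precisely what is unavailable (besides the intrinsic/extrinsic circularity you yourself flag). And the trace bound on $\Delta^M r$ does not control $\Delta^S\big(d^2\big)=g^{ij}D_iD_j(d^2)$, because $g^{ij}=\sigma^{ij}-u^iu^j/W^2$ weights the eigendirections of $D^2d$ unevenly: when $W$ is large it nearly annihilates the $\nabla^M u$ direction, so a trace bound (which permits one large positive eigenvalue of $D^2 d$ compensated by a negative one) gives no upper bound on the contraction. In Lemma \ref{lem3.2} the trace sufficed only because the barrier there is radial, so its gradient is aligned with $\nabla r$; here $\nabla^M u$ and $\nabla^M d$ are not aligned. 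What is needed is a pointwise upper bound on the eigenvalues of $D^2 d$, i.e. the Hessian comparison theorem, and that is exactly where the sectional curvature lower bound (not merely $\mathrm{Ric}^M\geq0$) is used and where $\Psi(R)=(n-1)\sqrt{K_0}R\coth(\sqrt{K_0}R)+1$ comes from (Lemma \ref{lem1} in the paper).

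Moreover, the steps you defer as routine are the actual heart of the estimate and are not supplied: (i) the quantitative computation at an interior maximum point $q$, namely that if $W(q)\geq\max\{2/\sqrt{3},\,16u(p)/R\}$ then $g^{ij}D_i\phi D_j\phi\geq 1/(64\,u(p)^2)$, so that the choice $K= 64u^2(p)\Psi(R)/R^2+2$ forces $Lh(q)>0$ and contradicts the maximum principle --- this is where the exponent $C_2u^2(p)\Psi(R)/R^2$ is actually generated, and the "cross-term absorption" is not automatic but depends on the specific combination $-u/2u(p)-d^2/R^2$ in $\phi$; and (ii) the cut-locus issue: $d(\cdot,p)^2$ is only Lipschitz, so the maximum may occur where your test function is not smooth; the paper needs a Calabi-type argument replacing $p$ by $p^{\varepsilon}=\gamma(\varepsilon)$ (Lemma \ref{lem2}), which your sketch does not address. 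Finally, a small slip: the correct identity is $\Delta^S\log W=|A|^2+(1-W^{-2})\mathrm{Ric}^M(\gamma,\gamma)+|\nabla^S\log W|^2$; the omitted gradient term is harmless for the subharmonicity you invoke, but the equality as you state it is not correct.
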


\begin{proof} Let $h(x)=\eta(x) W(x)$ with $W=\sqrt{1+|\nabla^M u|^2}
  ,\,\, \eta(x)=g(\phi(x)),\,\, g(t)=e^{Kt}-1,\,\,
  \phi(x)=(-u(x)/2u(p)+(1-\frac{d(x,p)^2}{R^2}))^+$ where $+$ denotes
  the positive part.  Let $C(p)$ denote the cut locus of $p$ and
  $\mathcal{U}(p)=B_R(p) \setminus C(p)$ be the set of points $q\neq
  p$ in $B_R(p)$ for which there is a unique minimal geodesic $\gamma
  $ joining $p$ and $q$ with $q$ not conjugate to $p$ along $\gamma$.
  It is well-known that $d(x,p)$ is smooth on $\mathcal{U}(p)$ which
  is open. Note that
  $d(x,p)^2$ and so $h(x)$ is smooth in a  neighborhood of $p$.\\[2ex]
  \emph{Case 1:} The max of h
  occurs at a point  $q\in \mathcal{U}(p)$\\[2ex]
  From \eqref{eq13.1} we find since $M$ has nonnegative Ricci
  curvature, \be \label{eq15} Lh := \Delta^S h-2g^{ij}\frac{D_i
    W}{W}D_j h \geq Ke^{K\phi}W\big(\Delta_S \phi +K g^{ij}D_i \phi
  D_j \phi\big)\, .  \ee

  The point is now to choose $K$ so that $\Delta^S \phi +K g^{ij}D_i
  \phi D_j \phi>0$ on the set where $h>0$ and $W$ is large. We will
  need a standard comparison lemma \cite{K}.

\begin{lemma} \label{lem1} Suppose $M$ has sectional curvatures
    $K_{\pi} \geq -K_0$ for a nonnegative constant $K_0$. Let $q \in
    \mathcal{U}(p)$ Then the (nonzero) eigenvalues of $D^2 d(p, x)$ at
    $q$ (principal curvatures of the local distance sphere through
    $q$) are bounded above by those of the corresponding distance
    sphere in the hyperbolic space of curvature $-K_0$.
\end{lemma}

We have $\Delta^S u=0$ so
\[\Delta^S \phi=-\frac2{R^2}\big(d(x,p)\Delta^S d(x,p)+g^{ij} D_i d(x,p)D_j d(x,p)\big)\,.\]
Using Lemma \ref{lem1} and \eqref{eq12} we see that \be \label{eq16}
\Delta^S \phi \geq -\frac{\Psi(R)}{R^2} \ee where
$\Psi(R)=(n-1)\sqrt{K_0}R\coth\big(\sqrt{K_0}R\big)+1$
at a point $q \in \mathcal{U}(p)$.\\[2ex]
We next compute
\[ g^{ij}D_i \phi D_j \phi=g^{ij}D_i\bigg(\frac{u(x)}{2u(p)}+\frac{2d(x,p)}{R^2}D_i d(x,p)\!\bigg)D_j\bigg(\frac{u(x)}{2u(p)}+\frac{2d(x,p)}{R^2}D_j d(x,p)\!\bigg)\]
\[=\frac{|\nabla u|^2}{4u(p)^2 W^2}+\frac{4d^2(x,p)}{R^4}\bigg(1-\Big\langle \frac{\nabla u}W, \nabla d(x,p)\Big\rangle_{\!\!M}^2\bigg)
+\frac{2d(x,p)}{u(p)R^2} \frac{\big\langle \nabla u, \nabla d(x,p)\big\rangle_M}{W^2}~.\]\\[1ex]
Hence 
\be \label{eq18}
g^{ij}D_i \phi D_j \phi \geq \bigg(\frac{|\nabla u|}{2u(p)
  W}-\frac{2}{RW}\bigg)^2\, .
\ee
Now assume that 
\be \label{eq20}
W(q) \geq \max\bigg\{\frac2{\sqrt{3}}, \frac{16 u(p)}R \bigg\}\, .
\ee
Then from \eqref{eq18} and \eqref{eq20},
\be \label{eq22}
g^{ij}D_i \phi D_j \phi \geq \frac1{64 u(p)^2}~.
\ee
Therefore from \eqref{eq16}, \eqref{eq22},
\be \label{eq24}
 \big(\Delta^S \phi +K g^{ij}D_i \phi D_j \phi\big) (q) \geq -\frac{\Psi(R)}{R^2}+\frac{K}{64 u^2(p)}
\ee
Now choose 
\be \label{eq26}
K=64u^2(p) \frac{\Psi(R) }{R^2}+2~.
\ee
Then \eqref{eq16} and \eqref{eq24} imply $Lh(q)>0$ contradicting the maximum principle. Hence 
\eqref{eq20} cannot hold and so
\be \label{eq28}
W(q) \leq \max\bigg\{\frac2{\sqrt{3}}, \frac{16 u(p)}R \bigg\}~.
\ee

Therefore $h(p)=(e^{\frac{K}2}-1)W(p) \leq (e^K -1)\max\big\{\frac2{\sqrt{3}}, \frac{16 u(p)}R \big\}$. After some manipulation we see that Theorem \ref{th1} follows.\\[2ex]
\emph{Case 2:} $q \not \in \mathcal{U}(p)$.

\begin{lemma}\label{lem2} a) Suppose the maximum of $h$ in $B_R(p)$ occurs at $q$. Then there is a unique minimal unit speed geodesic $\gamma(s)$ joining $p$ and $q$.\\
  b) For any $\e>0$, let $p^{\e}= \gamma(\e)$. Then $d(x, p^{\e})$ is
  smooth in a neighborhood of $q$.
\end{lemma}

\begin{proof} a) Suppose the maximum of $h$ occurs at $q\neq p$.  Then
  since $h(x) \leq h(q)$ and
\[d(x,p)=R\sqrt{1-\frac{u(x)}{2u(p)}-\frac1K \log{\Big(1+\frac{h(x)}{W(x)}\Big)}} \,\,~,\]
we see that
\[d(x,p) \geq \psi(x):=R\sqrt{1-\frac{u(x)}{2u(P)}-\frac1K \log {\Big(1+\frac{h(Q)}{W(x)}\Big)}}\]
with equality at $q$. Note that $\psi(x)$ is possibly only well
defined locally in a small neighborhood $B_{2\rho}(q)$. In this case we can let $\ol{\psi}(x)=\lambda(x) \psi(x)$
where $ 0 \leq \lambda(x) \leq 1$ is a smooth cutoff function with
\[\lambda(x)= \left \{ \begin{array}{cl} 1 & x \in B_{\rho}(q) \\
                               0 & x\in B_R(p) \setminus B_{2\rho}(q) \\
 \end{array} \right.
  \]  
Then $d(x,p) \geq \ol{\psi}(x)$ in $B_R(p)$ with equality at $q$.

Hence we may assume $\psi(x)$ is smooth on $B_R(p) $ and so
\[\psi(x)-\psi(q) \leq d(x,p)-d(q,p) \leq d(x,q)~;\]
hence $ |\nabla^M \psi(q)| \leq 1$.

Now let $\gamma(s)$ be a unit speed minimal geodesic
joining $p$ to $q$. Then
\[\psi(\gamma(s))\leq s~.\]
and
\[\psi(\gamma(d(q,p))=\psi(q)=d(q,p)~.\]
Hence $\nabla^M \psi(q)=\gamma'(d(q,p))$
and so there is only one minimal geodesic joining
$p$ and $q$.\\[2ex]
b) Clearly $q$ is not conjugate to $p^{\e}$. Moreover since $d(x, p^{\e})+\e \geq d(x,p) \geq \psi(x)$ with equality at $q$, 
the argument of part a) shows that $\gamma$ is the unique minimal geodesic joining $p^{\e}$ and $q$. Hence $q \in \mathcal{U}(p^{\e})$ so  $d(x, p^{\e})$ is smooth in a neighborhood of $q$.
\end{proof}

We now complete the proof of case 2. Define 
\[\phi^{\e}=-\frac{u(x)}{2u(p)}+\bigg(1-\frac{(d(x,p^{\e})+\e)^2}{R^2}\bigg)^+,\, 
\eta^{\e}=g(\phi^{\e}),\,h^{\e}=\eta^{\e}W~.\]
Then since $d(x, p^{\e})+\e \geq d(x,p) \geq \psi(x)$ with equality at $q$, we have that 
\[\phi^{\e} \leq \phi,\, \eta^{\e} \leq \eta, h^{\e} \leq h~.\]
with equality at $q  \in \mathcal{U}(p^{\e})$. Thus by Lemma \ref{lem2} we may apply  case 1 (and let $\e \goto 0$) to complete the proof.
\end{proof}

\begin{corollary} \label{cor2} Let $M$ be as in Theorem \ref{th1}. If
  $S$ is a complete minimal graph with height function $ u\geq 0$,
  then $|\nabla^M u| \leq C_1$.
\end{corollary}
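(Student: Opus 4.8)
The plan is to deduce Corollary \ref{cor2} from Theorem \ref{th1} by sending $R\to\infty$ in the pointwise gradient estimate. Since $S$ is a \emph{complete} minimal graph, for every fixed $p\in M$ and every $R>0$ the graph $S$ restricted to $B_R(p)$ satisfies the hypotheses of Theorem \ref{th1}, so
\[
|\nabla^M u(p)|\le C_1\,e^{C_2\,u^2(p)\,\Psi(R)/R^2},
\qquad \Psi(R)=(n-1)\sqrt{K_0}\,R\coth(\sqrt{K_0}R)+1.
\]
First I would analyze the exponent $\Psi(R)/R^2$ as $R\to\infty$. If $K_0>0$, then $\coth(\sqrt{K_0}R)\to 1$, so $\Psi(R)\sim(n-1)\sqrt{K_0}\,R$, hence $\Psi(R)/R^2\sim(n-1)\sqrt{K_0}/R\to 0$. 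If $K_0=0$ (the nonnegative sectional curvature case), then $\sqrt{K_0}R\coth(\sqrt{K_0}R)\to 1$, so $\Psi(R)\to n$ and again $\Psi(R)/R^2\to 0$. In either case the exponent tends to $0$, so $e^{C_2 u^2(p)\Psi(R)/R^2}\to 1$, and taking the limit gives $|\nabla^M u(p)|\le C_1$. Since $p$ was arbitrary and $C_1$ depends only on $n$ and $K_0$ (not on $p$, $R$, or $u$), this is the uniform bound claimed.

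The one point that needs a word of care is that $u(p)$ is a fixed finite number for each fixed $p$ (the height function is finite everywhere on a complete graph), so that $u^2(p)\,\Psi(R)/R^2\to 0$ genuinely holds for that fixed $p$; the bound on $|\nabla^M u(p)|$ is then independent of the value $u(p)$ precisely because the limit washes out the exponential factor. Thus no hypothesis controlling the growth of $u$ is needed — this is the essential gain from the quadratic-in-$R$ denominator in Theorem \ref{th1}.

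I do not expect a serious obstacle here; the corollary is a direct limiting consequence of the quantitative estimate, and the only thing to verify is the elementary asymptotics of $\Psi(R)/R^2$, which I would state in both cases $K_0>0$ and $K_0=0$ as above. One could phrase the whole argument in two sentences, but I would spell out the two curvature cases so the reader sees that $K_0=0$ is not an exceptional case requiring separate treatment of $\coth$.
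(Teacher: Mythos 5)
Your proof is correct and is exactly the paper's argument: the paper's entire proof of Corollary \ref{cor2} is ``Let $R \to \infty$ in Theorem \ref{th1}.'' Your spelled-out check that $\Psi(R)/R^2 \to 0$ in both cases $K_0>0$ and $K_0=0$, for each fixed $p$ with $u(p)$ finite, is just the elementary verification the paper leaves implicit.
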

\begin{proof} Let $R \goto \infty$ in Theorem \ref{th1}.
\end{proof}

\begin{remark} 
  As in \cite{SP}, there is a version of Theorem \ref{th1} for graphs
  with constant or variable mean curvature $H(x)$ assuming the
  sectional curvatures of M are bounded below with no assumption on
  Ricci curvature. In particular Corollary \ref{cor2} holds for
  bounded solutions under these hypotheses. The method presented here
  sharpens the result of \cite{SP} in that no control of injectivity
  radius is needed.
\end{remark}

Set $m(R)=\inf_{B_R(P)} u$.  Then more generally we have
\begin{corollary}\label{cor3}
  Let $M$ be as in Theorem \ref{th1} and let $S$  is a complete minimal graph with height function u.\\
  a) If $K_0>0$ assume $\limsup_{R\goto \infty} \frac{m^2(R)}R=0$.\\
  b)  if $K_0=0$ assume $\limsup_{R\goto \infty} \frac{|m(R)|}R=0$.\\[1ex]
  Then $|\nabla^M u| \leq C_1$.
\end{corollary}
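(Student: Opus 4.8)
The plan is to localize the gradient estimate of Theorem \ref{th1} at an arbitrary point and to shift $u$ so that it becomes nonnegative on a large ball. Fix $q\in M$ and, for $R>0$, set $m_q(R)=\inf_{B_R(q)}u$ and $v=u-m_q(R)$. Since the minimal surface equation depends only on $\nabla^M u$, the function $v$ is again the height function of a minimal graph, it satisfies $v\ge 0$ on $B_R(q)$, and $\nabla^M v=\nabla^M u$ (in particular $W$ is unchanged) at every point. Applying Theorem \ref{th1} to $v$ on $B_R(q)$ gives
\[
|\nabla^M u(q)|\;=\;|\nabla^M v(q)|\;\le\;C_1\,e^{\,C_2\,v(q)^2\,\Psi(R)/R^2}\, ,
\]
so it suffices to prove $v(q)^2\,\Psi(R)/R^2\to 0$ as $R\to\infty$; letting $R\to\infty$ and recalling that $q$ is arbitrary then gives $|\nabla^M u|\le C_1$ on $M$.

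To estimate $v(q)$ I would bring in the hypothesis on $m(R)$ by a ball comparison. With $d=d(p,q)$ one has $B_R(q)\subseteq B_{R+d}(p)$, hence $m_q(R)\ge m(R+d)$ and
\[
0\le v(q)=u(q)-m_q(R)\le u(q)-m(R+d)\le |u(q)|+|m(R+d)|\, .
\]
If $\inf_M u>-\infty$, then $|m(R+d)|$ stays bounded in $R$, and since $\Psi(R)/R^2\to 0$ in both cases --- $\Psi(R)=O(R)$ when $K_0>0$, and $\Psi(R)\equiv n$ when $K_0=0$ --- the limit $v(q)^2\Psi(R)/R^2\to 0$ is immediate. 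So one may assume $\inf_M u=-\infty$, in which case $|u(q)|+|m(R+d)|\le 2|m(R+d)|$ for $R$ large.

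It then remains to combine this with the growth of $\Psi$. If $K_0>0$, then $\coth(\sqrt{K_0}\,R)$ is bounded for $R\ge 1$, so $\Psi(R)\le C(K_0)\,R$ and
\[
v(q)^2\,\frac{\Psi(R)}{R^2}\;\le\;4\,C(K_0)\,\frac{m(R+d)^2}{R+d}\cdot\frac{R+d}{R}\;\longrightarrow\;0
\]
by hypothesis (a). If $K_0=0$, then $\Psi(R)=n$ and
\[
v(q)^2\,\frac{\Psi(R)}{R^2}\;\le\;4n\left(\frac{|m(R+d)|}{R+d}\cdot\frac{R+d}{R}\right)^{\!2}\;\longrightarrow\;0
\]
by hypothesis (b). In either case the exponent tends to $0$, which finishes the proof.

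I expect the only delicate point to be the bookkeeping: the estimate of Theorem \ref{th1} is anchored at the center of the ball and requires $u\ge 0$ on all of $B_R(q)$, which forces the shift by the \emph{local} infimum $m_q(R)$ rather than by a global constant (impossible when $\inf_M u=-\infty$); one then has to transport the hypothesis, stated for the fixed basepoint $p$, to the moving center $q$, and this is precisely what the inclusion $B_R(q)\subseteq B_{R+d(p,q)}(p)$ together with the monotonicity of $R\mapsto m(R)$ accomplishes.
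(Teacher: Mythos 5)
Your proposal is correct and is exactly the argument the paper leaves implicit: the paper states Corollary \ref{cor3} without proof (as the ``more general'' version of Corollary \ref{cor2}, whose proof is just ``let $R\to\infty$ in Theorem \ref{th1}''), and the intended route is precisely to apply Theorem \ref{th1} to the vertically shifted height function $u-\inf_{B_R(q)}u$ and use $\Psi(R)=O(R)$ (resp. $\Psi\equiv n$) together with the growth hypothesis on $m(R)$. Your bookkeeping with the moving center via $B_R(q)\subseteq B_{R+d(p,q)}(p)$ and the dichotomy $\inf_M u>-\infty$ versus $\inf_M u=-\infty$ correctly handles the only delicate points, so the write-up is a complete proof along the intended lines.
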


We can now use the Moser technique as developed by Saloff-Coste
\cite{SL} and Grigor'yan \cite{Gr1} to prove Theorem \ref{th1.2}\,,
which is an extension of the corresponding result of Bombieri, De
Giorgi and Miranda \cite{BGM} for $M=\R^n$.

Assume that $S$ is an entire minimal graph with height function $u\geq
0$. According to Corollary \ref{cor2}, $ |\nabla^M u| \leq C_1$
globally on $M$. Thus the induced metric $g_{ij}$ given by (\ref{eq5})
is uniformly elliptic and the Laplacian $\Delta^S$ on $S$ given by
(\ref{eq12}) is a divergence form uniformly elliptic operator.  We may
by translation assume $\inf_{M}u=0$. Thus given any $\e>0$ there is a
point $p\in M$ with $u(p) \leq \e$. Applying the Harnack inequality
Theorem 7.4 of \cite{SL} yields for all $R$
\[ \sup_{B_R(p)} u \leq C \inf_{B_R(p) } u \leq C\e~\]
for a uniform constant $C$ independent of $R$. Letting $R \goto \infty$ and then $\e \goto 0$ gives $u \equiv 0$.

\begin{remark}\label{rem2} Theorem \ref{th1.2}  can be improved somewhat to allow  $\limsup_{R\goto \infty} \frac{|m(R)|}{R^\alpha}=0$
  for some controlled small $\alpha \in (0,\frac12)$.
\end{remark}

\def\cprime{$'$}

\end{document}